\newtheorem{theorem}{\bf Theorem}[section]
\newtheorem{corollary}[theorem]{\bf Corollary}
\journal{.}
\begin{document}

\begin{frontmatter}

\title{Relationship Between the Hosoya Polynomial and the Edge-Hosoya Polynomial of Trees}


\author[mymainaddress]{Niko Tratnik}
\ead{niko.tratnik1@um.si}
\author[mymainaddress,mysecondaryaddress]{Petra \v Zigert Pleter\v sek}
\ead{petra.zigert@um.si}

\address[mymainaddress]{Faculty of Natural Sciences and Mathematics, University of Maribor, Slovenia}
\address[mysecondaryaddress]{Faculty of Chemistry and Chemical Engineering, University of Maribor, Slovenia}

\begin{abstract}
We prove the relationship between the Hosoya polynomial and the edge-Hosoya polynomial of trees. The connection between the edge-hyper-Wiener index and the edge-Hosoya polynomial is established. With these results we also prove formulas for the computation of the edge-Wiener index and the edge-hyper-Wiener index of trees using the Wiener index and the hyper-Wiener index. Moreover, the closed formulas are derived for a family of chemical trees called regular dendrimers.
\end{abstract}

\begin{keyword}
Hosoya polynomial \sep edge-Hosoya polynomial \sep tree \sep dendrimer \sep edge-hyper-Wiener index
\MSC[2010] 92E10 \sep  05C31 \sep 05C05 \sep 05C12
\end{keyword}

\end{frontmatter}

\linenumbers

\section{Introduction}
The first distance-based topological index was the Wiener index introduced in 1947 by H. Wiener \cite{Wiener}. Later, in 1988 H. Hosoya  \cite{hosoya} introduced some counting polynomials in chemistry, among them the Wiener polynomial, which is strongly connected to the Wiener index. Nowadays, it is known as the {\it Hosoya polynomial}. Another distance-based topological index, the hyper-Wiener index, was introduced in 1993 by M. Randi\' c \cite{randic}. All these concepts found many applications in different fields, such as chemistry, biology, networks.

The Hosoya polynomial, the Wiener index, and the hyper-Wiener index are based on the distances between pairs of vertices in a graph, and similar concepts have been introduced for distances between pairs of edges under the names the edge-Hosoya polynomial \cite{behm}, the edge-Wiener index \cite{iran-2009}, and the edge-hyper-Wiener index \cite{edge-hyper}. In this paper we study the relationships between the vertex-versions and the edge-versions of the Hosoya polynomial, the Wiener index, and the edge-Wiener index of trees.

\section{Preliminaries}

Unless stated otherwise, the graphs considered in this paper are connected. We define $d(x,y)$ to be the distance between vertices $u, v\in V(G)$. The distance $d(e,f)$ between edges $e$ and $f$ of graph $G$ is defined as the distance between vertices $e$ and $f$ in the line graph $L(G)$. 
\bigskip

\noindent
If $G$ is a connected graph with $n$ vertices, and if $d(G,k)$ 
is the number of (unordered) pairs of its vertices that are at distance $k$, 
then the \textit{Hosoya polynomial} of $G$ is defined as
$$
H(G,x) = \sum_{k \geq 0} d(G,k)\,x^k.
$$
Note that $d(G,0)= n$. Similarly, if $d_e(G,k)$ 
is the number of (unordered) pairs of edges that are at distance $k$, 
then the \textit{edge-Hosoya polynomial} of $G$ is defined as
$$
H_e(G,x) = \sum_{k \geq 0} d_e(G,k)\,x^k.
$$
Obviously, for any connected graph $G$ it holds $H_e(G,x) = H(L(G),x)$.
\bigskip

\noindent
The \textit{Wiener index} and the \textit{edge-Wiener index} of a connected graph $G$ are defined  in the following way:
$$W(G) = \sum_{\lbrace u,v\rbrace \subseteq V(G)}d(u,v), \ \ \quad W_e(G) = \sum_{\lbrace e,f\rbrace \subseteq E(G)}d(e,f).$$

\noindent
It is easy to see that $W_e(G) = W(L(G))$. The main property of the Hosoya polynomial and the edge-Hososya polynomial, that makes them interesting in chemistry, follows directly
from the definitions (see also \cite{petkovsek}):
\begin{equation} \label{reh} W(G) = H'(G,1), \quad \ \ W_e(G) = H_e'(G,1).
\end{equation}

\noindent
The \textit{hyper-Wiener index} and the \textit{edge-hyper-Wiener index} of a connected graph $G$ are defined as:
$$WW(G) = \frac{1}{2}\sum_{\lbrace u,v\rbrace \subseteq V(G)}d(u,v) + \frac{1}{2}\sum_{\lbrace u,v\rbrace \subseteq V(G)}d^2(u,v),$$
$$WW_e(G) = \frac{1}{2}\sum_{\lbrace e,f\rbrace \subseteq E(G)}d(e,f) + \frac{1}{2}\sum_{\lbrace e,f\rbrace \subseteq E(G)}d^2(e,f).$$

\noindent
Again, it holds $WW_e(G) = WW(L(G))$. Moreover, the following relationship was proved in \cite{cash} for any connected graph $G$:
\begin{equation} \label{ecash} WW(G) = H'(G,1) + \frac{1}{2}H''(G,1).
 \end{equation}

\section{The edge-Hosoya polynomial of trees}

In this section we first show how the edge-hyper-Wiener index of an arbitrary connected graph can be calculated from the edge-Hosoya polynomial. 

\begin{theorem}
\label{povezava}
Let $G$ be a connected graph. Then
$$WW_e(G) = H_e'(G,1) + \frac{1}{2}H_e''(G,1).$$
\end{theorem}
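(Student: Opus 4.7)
The plan is to reduce the statement to equation (\ref{ecash}) by passing to the line graph. The excerpt already records the two crucial identities $H_e(G,x)=H(L(G),x)$ and $WW_e(G)=WW(L(G))$, so provided $L(G)$ is a connected graph the theorem will follow by transporting (\ref{ecash}) through these identifications. First I would handle the degenerate case $|E(G)|=0$, where both sides of the claimed equality vanish trivially, and then assume $G$ has at least one edge so that $L(G)$ is connected.

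Next I would apply equation (\ref{ecash}) to the connected graph $L(G)$ to obtain
$$WW(L(G)) \;=\; H'(L(G),1) + \tfrac{1}{2}H''(L(G),1).$$
Differentiating the identity $H_e(G,x)=H(L(G),x)$ once and twice in $x$ gives $H_e'(G,1)=H'(L(G),1)$ and $H_e''(G,1)=H''(L(G),1)$. Combining this with $WW_e(G)=WW(L(G))$ yields exactly the claimed formula.

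As a sanity check (and as an alternative self-contained route, should one wish to avoid invoking (\ref{ecash}) as a black box), I would verify directly from the definition of $H_e(G,x)=\sum_{k\geq 0}d_e(G,k)x^k$ that
$$H_e'(G,1)=\sum_{\{e,f\}\subseteq E(G)} d(e,f), \qquad H_e''(G,1)=\sum_{\{e,f\}\subseteq E(G)} d(e,f)\bigl(d(e,f)-1\bigr),$$
and then add $H_e'(G,1)+\tfrac12 H_e''(G,1)$ to recover $\tfrac12\sum d(e,f)+\tfrac12\sum d(e,f)^2 = WW_e(G)$.

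I do not expect a genuine obstacle here: the content is already packaged in (\ref{ecash}), and the line-graph identifications $H_e(G,x)=H(L(G),x)$ and $WW_e(G)=WW(L(G))$ are stated in the preliminaries. The only minor point requiring attention is ensuring $L(G)$ is connected (equivalently, $G$ has an edge) before invoking (\ref{ecash}); this is why I would separate out the trivial edgeless case at the start.
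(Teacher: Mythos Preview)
Your proposal is correct and follows essentially the same route as the paper: use $WW_e(G)=WW(L(G))$, apply (\ref{ecash}) to $L(G)$, and translate back via $H_e(G,x)=H(L(G),x)$. Your explicit treatment of the edgeless case and the direct sanity-check computation are additions the paper omits, but the core argument is identical.
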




\begin{proof}
Using Equation \ref{ecash} we obtain
$$WW_e(G)=WW(L(G))=H'(L(G),1) + \frac{1}{2}H''(L(G),1)=H_e'(G,1) + \frac{1}{2}H_e''(G,1)$$
and the proof is complete. \qed
\end{proof}

\noindent
The following theorem is the main result of this paper.
\begin{theorem}
Let $T$ be a tree. Then
\begin{equation} \label{glavna} H_e(T,x) = \frac{1}{x}H(T,x) - \frac{|V(T)|}{x}
.\end{equation}.
\end{theorem}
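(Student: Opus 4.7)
The plan is to reduce the polynomial identity to a coefficient-wise statement and then prove the coefficient equality by a bijection. Multiplying through by $x$ and rewriting, the claim becomes $xH_e(T,x) = H(T,x) - |V(T)|$. Comparing coefficients, this is equivalent to the combinatorial identity
\begin{equation*}
d_e(T,k) = d(T,k+1) \qquad \text{for every } k \geq 0.
\end{equation*}

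The case $k=0$ is immediate, since both quantities equal $|E(T)| = |V(T)|-1$: $d_e(T,0)$ counts the edges (by the convention used in the definition of $H_e$), and $d(T,1)$ counts pairs of vertices at distance $1$, which are exactly the edges.

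For $k \geq 1$, I would exhibit an explicit bijection $\Phi$ from the set of unordered pairs $\{u,v\}$ of distinct vertices with $d_T(u,v) = k+1$ to the set of unordered pairs $\{e,f\}$ of distinct edges with $d_{L(T)}(e,f) = k$. Given $\{u,v\}$ with $d_T(u,v) = k+1$, the tree $T$ contains a unique $u$--$v$ path $u = w_0, w_1, \ldots, w_{k+1} = v$; set $\Phi(\{u,v\}) = \{w_0w_1,\, w_kw_{k+1}\}$, which is a pair of distinct edges because $k+1 \geq 2$. Reading the edge sequence $w_0w_1, w_1w_2, \ldots, w_kw_{k+1}$ as a walk in $L(T)$ shows $d_{L(T)}(e,f) \leq k$. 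For the inverse, given distinct edges $e,f$ at distance $k$ in $L(T)$, use the fact that $T$ is a tree to select the unique pair of ``nearest'' endpoints (one from each edge), and let $u,v$ be the two ``far'' endpoints; the unique $u$--$v$ path in $T$ then begins with $e$ and ends with $f$, and has length exactly $k+1$. Verifying that these two assignments are mutual inverses is then routine.

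The main technical point — really the only one — is confirming that the distance in $L(T)$ truly matches the claim, i.e.\ that no $e$--$f$ walk in $L(T)$ is shorter than $k$ edges. This is where the tree property is essential: any walk in $L(T)$ from $e$ to $f$ projects to a walk in $T$ passing through an endpoint of $e$ and an endpoint of $f$, and in a tree such a walk cannot be shorter than the unique path between those endpoints. Combining this lower bound with the upper bound from the explicit walk above pins down $d_{L(T)}(e,f) = k$ for the pair produced by $\Phi$, so the bijection is well defined, establishing $d_e(T,k) = d(T,k+1)$ and hence~\eqref{glavna}.
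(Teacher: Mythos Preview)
Your proposal is correct and follows essentially the same approach as the paper: both rewrite the identity as a coefficient-wise equality $d(T,k)=d_e(T,k-1)$ and prove it via the same bijection sending a vertex pair $\{u,v\}$ to the pair of terminal edges of the unique $u$--$v$ path. The only differences are cosmetic: you treat the $k=0$ case separately and give a more explicit justification of the line-graph distance, whereas the paper folds that case into the general bijection and simply asserts $d(e_x,e_y)=k-1$ as ``easy to see.''
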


\begin{proof}
It suffices to prove that
$$H(T,x) = xH_e(T,x) + |V(T)|.$$
Let $V_k$ be the set of all (unordered) pairs of vertices of $T$ that are at distance $k$ and let $E_k$ be the set of all (unordered) pairs of edges of $T$ that are at distance $k$, where $k \geq 0$. That means
$$V_k = \lbrace \lbrace x, y \rbrace \ | \ x,y \in V(T), \ d(x,y) = k \rbrace, $$ 
$$E_k = \lbrace \lbrace e, f \rbrace \ | \ e,f \in E(T), \ d(e,f) = k \rbrace. $$

We first show that for any $k \geq 1$ there exists a bijective function $F: V_k \rightarrow E_{k-1}$. To define $F$, let $k \geq 1$ and let $x,y \in V(T)$ such that $d(x,y) = k$. Furthermore, let $P$ be the unique path in $T$ connecting $x$ and $y$. Obviously, $d(x,y) = |E(P)|= k$. We define $e_x$ to be the edge of $P$ which has $x$ for one end-vertex. Similarly, $e_y$ is the edge of $P$ which has $y$ for one end-vertex. It is easy to see that $d(e_x,e_y) = k-1$. With this notation we can define
$$F (\lbrace x,y\rbrace ) = \lbrace e_x,e_y \rbrace$$
for every $\lbrace x, y \rbrace \in V_k$.
Obviously, $F$ is a well-defined function. 

To show that $F$ is injective, let $\lbrace x,y\rbrace, \lbrace a, b \rbrace \in V_k$, $k \geq 1$, and suppose $F(\lbrace x,y\rbrace) = F(\lbrace a,b \rbrace)$. It follows that $\lbrace e_x, e_y \rbrace = \lbrace e_a, e_b \rbrace$ and without loss of generality we can assume $e_x = e_a$ and $e_y = e_b$. If $x=a$, we also get $y=b$, since otherwise $e_y \neq e_b$. Therefore, $\lbrace x, y \rbrace = \lbrace a,b\rbrace$. If $x \neq a$, it follows that $x=b$ and $y=a$. Again, $\lbrace x, y \rbrace = \lbrace a,b\rbrace$ and we are done.

To show that $F$ is surjective, we take $\lbrace e,f\rbrace \in E_{k-1}$. Let $x$ be the end-vertex of $e$ and $y$ the end-vertex of $f$ such that $d(x,y) = d(e,f) +1 = k$. Obviously, $x$ and $y$ are uniquely defined. It is easy to see that $F(\lbrace x,y\rbrace) = \lbrace e,f\rbrace$.

We have shown that for every $k \geq 1$ it holds $d(T,k) = |V_k| = |E_{k-1}|=d_e(T,k-1)$. It is also obvious that $d(T,0)=|V(T)|$. Hence, polynomials $H(T,x)$ and $ xH_e(T,x) + |V(T)|$ have the same coefficients. Therefore, Equation \ref{glavna} it true and the proof is complete. \qed
 
\end{proof}

\noindent
As a corollary we can now express the edge-Wiener index and the edge-hyper-Wiener index of trees with the Wiener index and the hyper-Wiener index.

\begin{corollary}
\label{posledica}
Let $T$ be a tree. Then
$$W_e(T) = W(T) - { {|V(T)|} \choose {2}}$$
and
$$WW_e(T)= WW(T) - W(T).$$
\end{corollary}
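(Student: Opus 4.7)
The plan is to differentiate the identity from the main theorem once and twice, evaluate at $x=1$, and match the results against the expressions for the (edge-)Wiener and (edge-)hyper-Wiener indices coming from equation (1), equation (2) and Theorem \ref{povezava}. Throughout let $n=|V(T)|$, so $|E(T)|=n-1$.

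Starting from $H(T,x)=xH_e(T,x)+n$, a single differentiation gives
\begin{equation*}
H'(T,x)=H_e(T,x)+xH_e'(T,x).
\end{equation*}
Setting $x=1$ and applying (\ref{reh}) yields $W(T)=H_e(T,1)+W_e(T)$, so the first formula reduces to the claim that $H_e(T,1)=\binom{n}{2}$. For this I would simply evaluate $H_e(T,1)=\sum_{k\ge 0}d_e(T,k)$, observing that it counts all unordered ``pairs'' of edges of $T$ (with the self-pairs $\{e,e\}$ contributing at distance $0$); this equals $(n-1)+\binom{n-1}{2}=\frac{(n-1)(n-2)+2(n-1)}{2}=\binom{n}{2}$, which gives $W_e(T)=W(T)-\binom{n}{2}$.

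For the hyper-Wiener identity I would differentiate the same relation once more:
\begin{equation*}
H''(T,x)=2H_e'(T,x)+xH_e''(T,x),
\end{equation*}
so at $x=1$ we get $H''(T,1)=2H_e'(T,1)+H_e''(T,1)$, i.e.\ $\tfrac{1}{2}H''(T,1)=W_e(T)+\tfrac{1}{2}H_e''(T,1)$. Now (\ref{ecash}) says $WW(T)=W(T)+\tfrac{1}{2}H''(T,1)$ and Theorem \ref{povezava} says $WW_e(T)=W_e(T)+\tfrac{1}{2}H_e''(T,1)$. Substituting gives $WW(T)-W(T)=W_e(T)+\tfrac{1}{2}H_e''(T,1)=WW_e(T)$, which is the second claim.

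There is no genuine obstacle here; the whole corollary is an immediate algebraic consequence of the main theorem together with (\ref{reh}), (\ref{ecash}) and Theorem \ref{povezava}. The only point requiring a moment of care is the bookkeeping of $H_e(T,1)$: one must remember that the distance-$0$ coefficient $d_e(T,0)$ counts the edges themselves (paralleling the convention $d(G,0)=n$ for vertices), so that $H_e(T,1)=(n-1)+\binom{n-1}{2}$, which collapses to $\binom{n}{2}$ exactly because $T$ is a tree with $n-1$ edges.
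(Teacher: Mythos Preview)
Your proof is correct and follows essentially the same approach as the paper: differentiate the main identity and evaluate at $x=1$, then invoke (\ref{reh}), (\ref{ecash}) and Theorem~\ref{povezava}. The only cosmetic differences are that you differentiate the product form $H(T,x)=xH_e(T,x)+n$ (slightly cleaner than the paper's quotient-rule computation on $H_e(T,x)=(H(T,x)-n)/x$) and that you evaluate $H_e(T,1)=\binom{n}{2}$ directly from the edge side, whereas the paper obtains the same constant via $H(T,1)=\binom{n}{2}+n$.
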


\begin{proof}
First we notice that if $G$ is a graph, then
\begin{equation} \label{pomoc} H(G,1) = \sum_{k \geq 0} d(G,k) = {{|V(G)|} \choose {2}} + |V(G)|. 
\end{equation}

\noindent
After differentiating Equation \ref{glavna} we obtain
\begin{equation} \label{odvod1} H_e'(T,x) = \frac{H'(T,x)x - H(T,x) + |V(T)|}{x^2}
 \end{equation}
and
\begin{align}
H_e''(T,x) &
 =  \frac{H''(T,x)x^3 - 2H'(T,x)x^2 + 2H(T,x)x -2x|V(T)|}{x^4}. \label{odvod2} 
\end{align}

\noindent
Using Equation \ref{odvod1} and Equation \ref{pomoc} it follows,
\begin{eqnarray*}
W_e(T) &= & H_e'(T,1) \\
& = & H'(T,1) - H(T,1) + |V(T)| \\
& = & W(T) - { {|V(T)|} \choose {2}}.
\end{eqnarray*}
Finally, Theorem \ref{povezava}, Equation \ref{odvod1}, Equation \ref{odvod2}, and Equation \ref{ecash} imply

\begin{eqnarray*}
WW_e(T) &= & H_e'(T,1) + \frac{1}{2}H_e''(T,1) \\
& = & H'(T,1) - H(T,1) + |V(T)| \\
& + & \frac{1}{2}H''(T,1) - H'(T,1) + H(T,1) - |V(T)| \\
& = & WW(T) - W(T).
\end{eqnarray*}

\qed
\end{proof}

\section{The edge-Hosoya polynomial of dendrimers}

Dendrimers are highly regular trees, which are of interest to
chemists, since they represent repetitively branched molecules. In this section we compute the edge-Hosoya polynomial, the edge-Wiener index and the edge-hyper-Wiener index of regular dendrimers. 

In particular, $T_{k,d}$ stands for the $k$-th \textit{regular dendrimer} of degree $d$. For any $d \geq 3$, $T_{0,d}$ is the one-vertex graph and $T_{1,d}$ is the star with $d+1$ vertices. Then for any $k \geq 2$ and $d \geq 3$, the tree $T_{k,d}$ is obtained by attaching $d-1$ new vertices of degree one to the vertices of degree one of $T_{k-1,d}$. For an example see Figure \ref{dendrimer}. The parameter $k$ corresponds to what in dendrimer chemistry is called ``number of generations'' \cite{gut-1994}.

\begin{figure}[h!] 
\begin{center}
\includegraphics[scale=0.6]{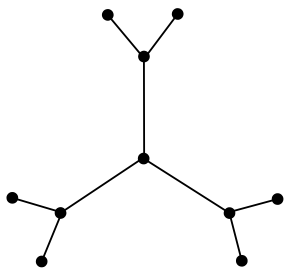}
\end{center}
\caption{\label{dendrimer} Regular dendrimer $T_{2,3}$.}
\end{figure} 

In \cite{sagan} the Wiener polynomial $W(G,x)$ of a graph $G$ was considered and the definition of this polynomial is slightly different from the definition of the Hosoya polynomial, such that $H(G,x) = W(G,x) + |V(G)|$. Hence, from Equation \ref{glavna} it follows
$$H_e(G,x) = \frac{1}{x}W(G,x).$$

\noindent
Therefore, to compute the edge-Hosoya polynomial we can use this formula and the result regarding the Wiener polynomial of a regular dendrimer in \cite{sagan}. After changing some labels we obtain
$$H_e(T_{k,d},x) = \sum_{i = 0}^{k-1}(d-1)^{2i}d\frac{(d-1)^{k-i}-1}{d-2}x^{2i} + \sum_{i=0}^{k-1}(d-1)^{2i}{{d} \choose {2}} \cdot \Bigg( d\frac{(d-1)^{k-i-1}-1}{d-2} + 1 \Bigg)x^{2i+1}.$$
It follows from Equation \ref{reh} and Theorem \ref{povezava} that the edge-Wiener index and the edge-hyper-Wiener index can be easily computed from the derivatives of the edge-Hosoya polynomial. Therefore, we obtain
$$W_e(T_{k,d})= \frac{d \Big(2-2 d+(d-1)^k (d^2+4d-4)+(d-1)^{2 k} (2-d (d+2)+2 (d-2) d k)\Big)}{2 (d-2)^3} $$
and

\begin{eqnarray*}
WW_e(T_{k,d}) &=  & d\frac{ 2 (d-1)+(d-1)^k \left(4-5 d^2\right)}{2
(d-2)^4} \\
&+ &  d\frac{ (d-1)^{2 k} \Big(-2-8 k+d \left(-2+5 d+16 k-d (d+4) k+2 (d-2)^2 k^2\right)\Big)}{2
(d-2)^4}.
\end{eqnarray*}
Since the Wiener index and the hyper-Wiener index of regular dendrimers are already known (see \cite{gut-1994,sagan,diudea}), the edge-Wiener index and the edge-hyper-Wiener index could also be computed in terms of Corollary \ref{posledica}.
\section*{Acknowledgment}
\noindent
Supported in part by the Ministry of Science of Slovenia under grant $P1-0297$.

\bibliography{mybibfile}

\end{document}